\newcommand{\rrvert}{\vert}
\newcommand{\llvert}{\vert}
\def\mathds{\mathbh}
\def\leqslant{\leq}
\def\geqslant{\geq}
\newcommand{\oo}{\infty}
\renewcommand{\H}{\mathcal{H}}
\newcommand{\M}{\mathcal{M}}
\newcommand{\N}{\mathcal{N}}
\renewcommand{\S}{\mathcal{S}}
\newcommand{\EE}{\mathbb{E}}
\newcommand{\NN}{\mathbb{N}}
\newcommand{\PP}{\mathbb{P}}
\newcommand{\RR}{\mathbb{R}}
\newcommand{\ZZ}{\mathbb{Z}}
\newcommand{\dd}{{\mathrm d}}
\newcommand{\sgn}{\operatorname{sgn}}
\newcommand{\I}{\mathds{1}}
\newtheorem{theorem}{Theorem}
\newtheorem{lemma}{Lemma}
\newtheorem{proposition}{Proposition}
\begin{document}
\begin{frontmatter}

\title{Greedy walk on the real line}
\runtitle{Greedy walk on the real line}

\begin{aug}
\author[A]{\fnms{Sergey}~\snm{Foss}\ead[label=e1]{s.foss@hw.ac.uk}},
\author[B]{\fnms{Leonardo T.}~\snm{Rolla}\ead[label=e2]{leorolla@dm.uba.ar}}
\and
\author[C]{\fnms{Vladas}~\snm{Sidoravicius}\corref{}\ead[label=e3]{vladas@impa.br}}
\runauthor{S. Foss, L. T. Rolla, V. Sidoravicius}
\affiliation{Heriot-Watt University,
Novosibirsk State University and Institute of Mathematics,
Instituto de Matem\'atica Pura e Aplicada and Universidad de Buenos Aires, and
Instituto de Matem\'atica Pura e Aplicada}
\address[A]{S. Foss\\
School of Mathematical \& Computer Sciences\\
Actuarial Mathematics and Statistics\\
Heriot-Watt University\\
Edinburgh\\
EH14 4AS\\
United Kingdom\\
\printead{e1}} 
\address[B]{L. T. Rolla\\
Departamento de Matem\'atica\\
Universidad de Buenos Aires\\
Ciudad Universitaria\\
Capital Federal C1428EGA\\
Argentina\\
\printead{e2}}
\address[C]{V. Sidoravicius\\
Instituto de Matem\'atica Pura e Aplicada\\
Estrada Dona Castorina 110\\
Rio de Janeiro 22460-320\\
Brazil\\
\printead{e3}}
\end{aug}

\received{\smonth{3} \syear{2013}}
\revised{\smonth{10} \syear{2013}}

%
\begin{abstract}
We consider a self-interacting process described in terms of a single-server
system with service stations at
each point of the real line.
The customer arrivals are given by a Poisson point
processes on the space--time half plane.
The server adopts a greedy routing mechanism, traveling toward the
nearest customer, and ignoring new arrivals while in transit.
We study the trajectories of the server and show that its asymptotic position
diverges logarithmically in time.
\end{abstract}

%
\begin{keyword}[class=AMS]
\kwd{60K25}
\kwd{60K35}
\kwd{90B22}
\end{keyword}
\begin{keyword}
\kwd{Greedy policy}
\kwd{self-interaction}
\kwd{long-term behavior}
\kwd{stability}
\end{keyword}
\end{frontmatter}

\section{Introduction}
\label{sec:sec1introduction}

We consider a self-interacting process described in terms of a single-server
system with service stations at
each point of the real line.
The system is described as follows.
Initially, there is a Poisson field of customers in $\RR$ with unit
intensity and the server starts
at $x=0$.
Customers arrive as a Poisson point process in the space--time
$\RR\times\RR_+$ with intensity $\lambda>0$.
When not serving, the server chooses the nearest customer and travels toward
it at speed $0 < v \leqslant\oo$, ignoring new arrivals.
The service then takes $T$ units of time with $\EE T=1$,
after which the customer leaves the system.
This is a common example of a routing mechanism that depends on the system
state, and targeting the nearest customer is known as a \emph{greedy strategy}.

The particular interest in customer-server systems in continuous space
stems from their transparent description of large systems with spacial
structure,
in contrast with finite systems where phenomenological properties are
often obscured by combinatorial aspects of the model.
However, systems with greedy routing strategies in the continuum are extremely
\emph{sensitive to microscopic perturbations}, and their rigorous study
represents a challenging problem;
a topic that has been active
for almost three
decades \cite{altman-levy-94,bertsimas-vanryzin-91,bordenave-foss-last-11,coffman-gilbert-87,foss-last-96,kroese-schmidt-94,kroese-schmidt-96,leskela-unger-11,litvak-adan-01,meester-quant-99,robert-10,rojasnandayapa-foss-kroese-11}.

The system described above arises naturally in the
question of stability of a greedy server on the circle $\RR/\ZZ$.
It was conjectured in \cite{coffman-gilbert-87} that the greedy server
on $\RR/\ZZ$ is stable when $\lambda<1$, regardless of the speed $v$.
This~was verified only under light-traffic assumptions, that is, for large
enough $v$
given $\lambda$~\cite{kroese-schmidt-96},
and for the greedy server
on a discrete ring
$\ZZ/n\ZZ$ \cite{foss-last-96,foss-last-98,meester-quant-99,schasberger-95}.\hskip.2pt\footnote{Stability
was also shown for
a number of other finite graphs and a broader class of service
strategies \cite{schasberger-95}, as well as
several nongreedy policies \cite{kroese-schmidt-94},
a gated-greedy variant on convex spaces \cite{altman-levy-94}
and random nongreedy servers on general spaces \cite{altman-foss-97}.
See
\cite{rojasnandayapa-foss-kroese-11} for a recent review.}
Yet, discrete models have not been able to grasp the microscopic
nature of the greedy mechanism in continuous space, and there are major
obstacles in extrapolating any approach based on a discrete approximation.

On the other hand, stability under the same conditions
is known to hold for the \emph{polling
server} on $\RR/\ZZ$, that is, the server whose strategy is to always travel
in the same direction \cite{kroese-schmidt-92}.\hskip.2pt\footnote{Several other state-independent strategies have been analyzed and, in
particular, the following two:
after each service, the server decides to move next in a direction
chosen at random;
the server follows the path of a Browning motion.
Stability under the same conditions holds in both cases, see~\cite{kroese-schmidt-92}.}
Simulations indicate not only that the greedy server is
stable, but also that under heavy traffic conditions its
dynamics resembles that of the
polling server \cite{coffman-gilbert-87}.

This prompts a detailed study of its \emph{local behavior},
and it is natural to describe it with a model on an \emph{infinite line}.
The model on the line is clearly different from that on the circle as a
queueing system, for its total arrival rate is infinite.
The study of the former is rather intended to give \emph{mathematical
insight} on the server's local motion, as is confirmed in
\cite{rolla-sidoravicius-b}.
It was shown in \cite{kurkova-menshikov-97} that the position of the greedy
server on $\ZZ$ is \emph{transient}.
But again, the behavior in this case is governed by averaging effects
inside each discrete cell overcrowded by waiting customers, and its
understanding is of little help for the continuous-space system.
The goal of this paper is to study the greedy server on $\RR$.

The main difficulty in studying this model is due to the
interplay between the server's motion and
the environment of waiting customers that surround it.
This interplay is given by the
\emph{interaction} at the \emph{microscopic level} resulting from the
greedy choice of the next customer and the removal of those who have been
served.
The server's path is locally \emph{self-repelling}, since the removal
of already
served
customers makes it less likely for the greedy server to take the next
step back into the recently visited regions.

There are several deep studies of processes which in different ways are
self-repelling.
This includes examples of self-interacting walks such as
the random walk avoiding its past convex
hull \cite{angel-benjamini-virag-03,zerner-05},
the prudent walk \cite{beffara-friedli-velenik-10,bousquet-melou-10},
the ``true'' self-avoiding walk \cite{toth-95,toth-99} and
excited random walks \cite{benjamini-wilson-03}.
In the continuum setup, one has
the self-interacting diffusion with repulsion \cite{mountford-tarres-08},
the perturbed Brownian
motions \cite{carmona-petit-yor-98,chaumont-doney-99,davis-96,davis-99,perman-werner-97},
the excited Brownian motions \cite{raimond-schapira-11}
and
random paths with bounded local time \cite{benjamini-berestycki-10}.\hskip.2pt\footnote{See the introductions
of \cite{mountford-tarres-08,raimond-schapira-11} for
concise reviews on these models, \cite{merkl-rolles-06} for a review on
reinforced walks, and \cite{pemantle-07} for a comprehensive survey on the
field up to 2007.
}
It was clear since these models were introduced that they
could not be treated via standard methods and tools.
Despite the existence of a few disconnected techniques
that have proved useful in specific situations,
this rich research field still lacks a systematic basis of
study.\footnote{Except for the family of universality classes
given by the Schramm--L\"{o}wner Evolutions \cite{schramm-00},
which include $2$-dimensional loop-erased random
walk \cite{lawler-schramm-werner-04-1} and several other
models \cite{lawler-schramm-werner-04,smirnov-01,smirnov-10}.}
A lot remains to be understood even in dimension $d=1$,
and in particular
none of the known techniques seems to be applicable in our case.

The future evolution of the greedy server's
position is of course influenced by its previous path.
But unlike the above models, here there is no direct prescription of
such influence in terms of occupation times.
A similar situation occurs while defining
the ``true self-repelling motion'' in
$d=1$ \cite{toth-werner-98}, although the authors show
that its
evolution depends on the occupation times, and moreover that such
dependency is
local.

Notice that, for the greedy server, ``self-repulsion''
does not imply immediately ``repulsion toward $\oo$,'' since the
server is
allowed to backtrack, in which case it starts being repelled back
toward the origin.\footnote{A difficulty similar in spirit was faced in
\cite{mountford-tarres-08},
where it was proved that a certain diffusion with self-repelling potential
has a power-law asymptotic behavior.}
Another particular feature of this model is an inverse relation between
the strength of self-repulsion (measured by the bias in the probability
that the
server takes the next step backward) and the average speed of the server.
The attraction felt by the server upon reaching unexplored
regions is increasing in time, due to the accumulation of customers
that keep arriving throughout the whole evolution, but at the same time
this high concentration of customers causes the subsequent traveled distances
to become shorter at the same proportion.

In this paper, we introduce a framework based on a randomized
representation of
the customers environment as
viewed from the
server
(namely, it learns only the information that is necessary and
sufficient to
determine the next movement, and the positions of further waiting customers
remain unknown).
This allows a fine description of the system behavior.
As a consequence of this approach, we show transience and describe the
server's asymptotics,
setting up an old question in the field
(stated, e.g., as Open Problem 4 in \cite{rojasnandayapa-foss-kroese-11}).

\begin{theorem}
\label{thm:transientlog}
Let ${\mathcal S}_t$ denote the server position at time $t$.
Assume that $\EE e^{\alpha T}<\oo$ for some $\alpha>0$.
Then for any $v>0$ and $\lambda>0$ the greedy server on the real line
is transient.
Moreover,
\[
\frac{\mathcal S_t}{\lambda^{-1}\log t} \to\pm1
\]
with probability $1/2$ each.
\end{theorem}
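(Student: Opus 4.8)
The plan is to analyse the server at the successive service-completion epochs $0=\sigma_0<\sigma_1<\sigma_2<\cdots$, writing $\xi_n:=\mathcal S_{\sigma_n}$, so that $\sigma_{n+1}-\sigma_n=|\xi_{n+1}-\xi_n|/v+T_{n+1}$ with the $T_n$ i.i.d., $\EE T=1$, $\EE e^{\alpha T}<\oo$. The system state is the whole (infinite) configuration of waiting customers — at time $\sigma_n$ it is the initial field together with all arrivals in $[0,\sigma_n]$, minus the $n$ points already served — but it has a simple shape relative to the server. One side of $\xi_n$ has never been explored by the server — call it the \emph{open side} (the right during a rightward run) — and there the customers form, up to a negligible correction, a fresh Poisson field of intensity $1+\sigma_n$, so the distance to the first customer on that side is $\mathrm{Exp}(1+\sigma_n)$, of order $1/\sigma_n$. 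The other side carries the \emph{swath} the server has itself swept: a point at distance $d$ behind the server was last swept at a time that is read off from the server's own trajectory, and the local customer intensity there equals the elapsed time since, which vanishes linearly (with slope of order $\sigma_n$) as $d\to0$ and saturates at order $\sigma_n$ on unit scale. Consequently the distance to the first customer on the swath side obeys a Rayleigh-type law of order $1/\sqrt{\sigma_n}$ — much larger than on the open side.

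From this gap asymmetry everything follows. \emph{A run sustains itself:} the probability that the next service falls on the swath side is $P\big(\mathrm{Rayleigh}(\sigma_n^{-1/2})<\mathrm{Exp}(\sigma_n)\big)=O(\sigma_n^{-1})$, and, conditionally on that rare event, the swath-side gap is forced down to order $\sigma_n^{-1}$ (it had to beat the open-side gap), so the recomputed asymmetry puts the server back on the open side at the very next step with probability $1-O(\sigma_n^{-1})$. Thus a step against the run is an isolated microscopic excursion of size $O(\sigma_n^{-1})$, and $\sum_n P(\text{two consecutive against-steps at }n)=\sum_n O(\sigma_n^{-2})<\oo$. \emph{Transience and the $1/2$--$1/2$ split:} by Borel--Cantelli only finitely many such double against-steps occur, so after an a.s.\ finite time every against-step is a single returning excursion and the macroscopic direction of motion is frozen; since the conditional means of the successive advances are then $\asymp\sigma_n^{-1}$ and therefore sum to $+\oo$, the running extremum in the frozen direction diverges. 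Hence a.s.\ $\mathcal S_t\to+\oo$ or $\mathcal S_t\to-\oo$, and by the reflection symmetry $x\mapsto-x$ of the whole construction the two alternatives are equally likely.

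\emph{The logarithmic rate.} Work on $\{\mathcal S_t\to+\oo\}$. After the (finite) time at which the server overtakes all its earlier positions, every forward step is an open-side gap, so $\xi_{n+1}-\xi_n$ equals $\mathrm{Exp}(1+\sigma_n)$ up to the summable excursion corrections, giving $\EE[\xi_{n+1}-\xi_n\mid\mathcal F_n]=1/(1+\sigma_n)$ with summable conditional variances. By the law of large numbers $\sum_{k\le n}T_k\sim n$, and since $\sigma_k\gtrsim k$ the $k$-th advance has conditional mean $O(1/k)$, so the total travel time up to epoch $n$ is $O((\log n)/v)=o(n)$; hence $\sigma_n\sim n$. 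A martingale law of large numbers then yields $\xi_n\sim\sum_{k\le n}1/\sigma_k\sim\sum_{k\le n}1/k\sim\log n\sim\log\sigma_n$. Between two epochs $\mathcal S$ changes by $O(\sigma_n^{-1/2})$ and $\log t$ by $O(\sigma_n^{-1})$, both negligible, so $\mathcal S_t/\log t\to1$; the limiting constant is exactly $1/\EE T$, which is where the normalisation $\EE T=1$ enters.

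The crux, and the main obstacle, is the geometric input of the first paragraph: rigorously establishing the ``age since last swept'' description of the swept swath and, above all, \emph{propagating it uniformly across all services and all times}. With no finite Markov chain to lean on, one must show that with overwhelming probability — and uniformly in the remote past — the swath's intensity profile stays trapped between explicit deterministic profiles, and that inside this trap the return estimate and the $O(\sigma_n^{-2})$ double-deviation estimate above actually hold; the low-intensity initial phase, where the motion is genuinely diffusive and the eventual direction is selected, then contributes only finitely many exceptional services and is absorbed into the same bounds. Because, as recalled in the introduction, greedy dynamics in the continuum are extremely sensitive to microscopic perturbations, keeping this profile under control over an unbounded horizon is where the real work lies.
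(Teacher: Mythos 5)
Your heuristic picture is essentially the paper's: the ``age since last swept'' profile is what the paper encodes as a \emph{potential} $u^0_n$, the $1/\sqrt{\sigma_n}$ versus $1/\sigma_n$ asymmetry between the swept swath and the open side is what makes the server self-sustaining, and the logarithmic rate comes from $\sum_k 1/\sigma_k \sim \log n$. But you have not written a proof. Your final paragraph candidly names ``the crux and the main obstacle'' -- making the age-since-swept description rigorous and propagating it uniformly over an unbounded horizon, plus absorbing the initial diffusive phase -- and then does none of it. That crux is the theorem. The paper's Section 2 builds exactly this: the potential Markov chain $(u^0_n)$ with its operators $\H_w$, the observation that unimodality and boundedness are preserved by $\H_w$ (this is the ``trapping between deterministic profiles'' you invoke), and translation/height invariance so the chain can be restarted at stopping times. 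Without this machinery your ``Rayleigh'' and ``Exp'' distributions are not well-defined conditional laws, and the $O(\sigma_n^{-2})$ double-deviation estimate has no starting point.

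There is also a circularity you do not resolve. Your swath/open-side dichotomy and the $O(\sigma_n^{-1})$ and $O(\sigma_n^{-2})$ bounds presuppose that a unidirectional run is already under way -- precisely what transience asserts. Your Borel--Cantelli step compounds the problem: the events ``two consecutive against-steps at step $n$'' are far from independent, a conditional Borel--Cantelli would need a uniform conditional estimate that you have not supplied, and even granting it you still must show that the infinitely many \emph{single} against-steps (which do occur, since $\sum 1/\sigma_n = \oo$) each resolve as a returning excursion -- an assertion, not a deduction, in your text. The paper avoids the circle with a block argument (Proposition~\ref{prop:blocks}): customers are grouped into blocks of size $\ell_j = \lceil 12 j^{1/4}+1\rceil$, each block tolerates at most one backward step, and one proves $\PP(A_j \mid A_{j-1},\dots,A_0) \geqslant p_j$ with $\prod_j p_j = \delta > 0$ \emph{uniformly over all centered unimodal starting potentials}. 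Success means the run continues forever with the stated asymptotics; failure is detected at a stopping time, from which the chain restarts in a fresh unimodal state and the attempt is repeated, succeeding a.s.\ after finitely many trials. This ``try, fail, restart'' structure is what replaces your Borel--Cantelli and also handles the initial diffusive phase without a separate argument. If you want to turn your sketch into a proof, you need to construct the analogue of the potential chain and of Proposition~\ref{prop:blocks}; what you have written stops at the point where the real work begins.
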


\begin{remark}
In our approach, it is important that the arrivals form a Poisson process
in space--time, and that they are independent of the service times.
\end{remark}

\begin{remark}
Assume that at time $0-$ the set of waiting customers is
distributed as a Poisson point process with intensity $\mu(x)\,\dd x$,
for some
nonnegative bounded measurable function $\mu$ with $\int\mu=\infty$,
and with an additional deterministic finite set of points.
Then Theorem~\ref{thm:transientlog} remains true (with
essentially the same proof), except for the lack of symmetry in the
probabilities of ${\mathcal S}_t$ diverging to $+\infty$ or $-\infty$.
\end{remark}

\begin{remark}
There is a dynamic version of the greedy server, where new arrivals
are not ignored while the server is traveling.
This variation might be studied by similar arguments, but the dynamic
mechanism introduces some extra complications that will not be
considered here.
\end{remark}

\begin{remark}
The assumption of constant speed is natural in several contexts where
the terminal speed is quickly achieved, but not crucial in our construction.
In fact, for a server moving with constant acceleration, or any other
mechanical constraints (which restarts after each service), mild
modifications of our method yield the same results.
Notice that the value of $v$ plays no role in Theorem~\ref{thm:transientlog}.
\end{remark}

Heuristically, the asymptotics described by Theorem~\ref
{thm:transientlog} is
what one should expect to happen, assuming that the server will
indeed move most of
the times in the same direction.
Suppose that
all of the first $N$ customers were found to the right of the server.
The typical
distance between the server and the next customer to the right is
about $\frac{1}{N}$, because customers have been arriving to this region
for about $N$ time units.
To the
left of the server, there are regions of size about $\frac{1}{N-1}$,
$\frac{1}{N-2}$, $\frac{1}{N-3}$, etc., where the arrival of customers
is rather
recent: they must have happened during the last $1$, $2$, $3$, etc.,
units of
time.
If the server is eventually moving only to the right (or the
excursions to the left
are very sparse in time), the server position $S_N$ should therefore
diverge as
$\log N$.\footnote{This heuristics is confirmed by the asymptotic behavior of
a continuous model in $\RR$ where there is no greedy mechanism
and the server is always moving to the right \cite{kurkova-96},
as well as for the greedy server on $\ZZ$ \cite{kurkova-menshikov-97}.}

However, the probability that the next customer is
found to the left of the server is about
$\frac{C}{N}$, which implies that it will happen some time in the future.
In fact, the server will make an excursion
of length $\frac{c}{N}$ to the left for infinitely many
$N$, for any constant $c$, in contrast with its discrete variant.
Nevertheless, the probability that the two next customers are both
to the left is
about $\frac{C}{N^2}$.
One may thus push this argument and show that indeed, with positive probability,
the system will never produce microscopic scenarios capable of
causing important changes in the server's course.

To make the above observation rigorous, we introduce a dynamic block
construction, where the block sizes are increasing at each step, and
combine it with a renewal argument.
The size $\ell_k$ of the blocks (groups of sequentially served
customers) should
increase slow enough so that the cleared region left by a block is wide enough
to support the next one, but fast enough so that the probability of atypical
gaps inside the blocks is summable in $k$.
It turns out that a growth $\ell_k \sim k^\eta$, with
$0<\eta<\frac{1}{2}$, works well for this purpose.

This paper is divided as follows.
In Section~\ref{sec:potential}, we present the evolution of the
customers environment as
viewed from the server, and study its properties.
We state Proposition~\ref{prop:blocks} about the behavior of the
greedy server
on the
real line at specific times (a block argument) and show how it implies
Theorem~\ref{thm:transientlog}, and in Section~\ref{sec:blocks} we prove
Proposition~\ref{prop:blocks}.
In these sections, we consider the case where $T$ is deterministic and
$v=\infty$.
This case contains the most important features of the construction
and the block argument, but is simpler to present.
The general case is considered in Section~\ref{sec:finitespeedrandomservice}.

\section{The process viewed from the server}
\label{sec:potential}

We consider a particular construction of the initial state by assuming
that there are arrivals during $t \in[-1,0]$, before the service starts
at $t=0$.
This is of course equivalent to simply starting at $t=0$ with a Poisson
field of
points.
Let $\nu$ denote the random set of arrivals in $\{(x,t)\dvtx x\in\RR, t >
-1\}$.

We want to construct the process by following a progressive exploration
of the
space--time until finding the mark $(x^*,t^*)\in\nu$ corresponding to the
nearest waiting customer, getting as
little information as possible about $\nu$.
The server is thus unaware of existing customers further than the
nearest one,
and keeps record of the last time when each point in space was
explored in the seek of waiting customers.

For the reasons mentioned above, here we consider the case $T=1$ and
$v=\oo$,
and postpone the general case to Section~\ref{sec:finitespeedrandomservice}.
Thus, the server's position $\S_t$ remains constant on intervals $t\in
[n-1,n)$.
By rescaling space, we can assume $\lambda=1$.

Starting at $t=0$, each region on the space has potentially witnessed the
arrival of customers during $1$ unit of time.
The first customer is then found at an exponentially-distributed
distance, to
the left or to the right with equal probabilities.
Discovering its position reveals the presence of a point in $\nu$, as
well as a
region where $\nu$ has no points.
For the second customer, there is a region in space that has potentially
witnessed the arrival of customers during $1$ unit of time (namely, the
region explored on the previous step), and the
complementary region has not been queried during the last $2$ units of time.
The position of the third customer is already more involved, and the positions
of both of the previous customers are important in determining the
regions where
$\nu$ is still unexplored.
Yet there is a general description which is amenable to study, which motivates
the construction described hereafter and depicted in Figure~\ref{fig:potentialchain}.

A \emph{potential} is a piecewise continuous function $u\dvtx \RR\to\RR$
such that
there is a unique point $x_*=S(u)$ where it attains its maximum
$\M=\M(u)=u(x_*)$.

Given a pair of positive numbers $w=(E,U)$, where $0<E<\infty$ and \mbox{$0<U<1$},
we define the operator $\H_w$ as follows.
Let $u$ be given and take $z>0$ as the unique number such that
\[
\int_{x_*-z}^{x_*+z}(\M-u)\,\dd x = E.
\]
Let
\[
a=\M-u(x_*-z),\qquad b=\M-u(x_*+z),
\]
choose
\[
x^{*} = %
\cases{ x_* - z, & \quad $\mbox{if }\displaystyle U \in\biggl(0,
\frac{a}{a+b}\biggr]$,\vspace*{2pt}
\cr
x_* + z, &\quad  $\mbox{if }\displaystyle U \in\biggl(
\frac{a}{a+b},1\biggr)$, } %
\]
and finally
%
\begin{equation}
\label{eq:randomupdate} \bigl(\H_w (u) \bigr) (x) = %
\cases{ \M+1, &\quad
$x=x^*$,\vspace*{2pt}
\cr
\M, &\quad $x\in[x_*-z,x_*+z],x\ne x^*$,\vspace*{2pt}
\cr
u(x), &\quad $\mbox{otherwise.}$} %
\end{equation}
Notice that $\M(\H_w (u))=\M(u)+1$, $S(\H_w (u))=x^{*}$, and
$\int_\RR[\H_w (u)-u]\,\dd x = E$.
Moreover, if $u$ is unimodal, then $\H_w (u)$ is also unimodal.

Start with $u_0^0\dvtx \RR\to\RR$ given by $u_0^0(x)=\delta_{0}(x)-1$, let
$(E_n)_n$
and $(U_n)_n$ be independent i.i.d. sequences of exponential and uniform
random variables, and write $w_n=(E_n,U_n)$.
Let $u_n^0 = \H_{w_n} (u_{n-1}^0)$ and write $S_n=S(u_n^0)$.

\begin{lemma}
\label{lemma:coupling}
The sequence $(S_n)_{n=1,2,\ldots}$ defined above has the same
distribution as
the sequence $({\mathcal S}_{t-})_{t=1,2,\ldots}$ given by the positions
of the
greedy server at integer times.
\end{lemma}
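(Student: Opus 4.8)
The plan is to show that the explicit recursive construction $(u_n^0)$ with the random operator $\H_{w_n}$ is exactly the ``environment viewed from the server'' for the greedy system, by exhibiting a coupling in which the potential $u_n^0$ records, for each point $x\in\RR$, (minus) the last time before step $n$ at which $x$ was queried in the search for the nearest customer. Concretely, I would prove by induction on $n$ that one can construct, on a common probability space, the Poisson field $\nu$ and the random set of customer positions so that: (i) $\M(u_n^0)=n$ always, corresponding to the fact that after serving $n$ customers the current time is $n$; (ii) the function $n-u_n^0(x)=\M(u_n^0)-u_n^0(x)$ equals the length of the time interval during which region $x$ has been accumulating unseen arrivals, i.e.\ the ``age'' of the environment at $x$; (iii) $S_n=S(u_n^0)$ equals ${\mathcal S}_{n-}$, the server's position just before the $n$-th service begins; and (iv) conditionally on the information revealed so far, the positions of the as-yet-undiscovered customers form a Poisson process whose intensity at $x$ is precisely $\M(u_n^0)-u_n^0(x)$ (one arrival per unit time per unit length). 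The base case is the initial configuration: with arrivals during $[-1,0]$ the age is uniformly $1$ everywhere and the server sits at $0$, which matches $u_0^0(x)=\delta_0(x)-1$ in the sense that $\M(u_0^0)-u_0^0(x)=1$ for $x\ne 0$, while the spike at $0$ encodes that the server is there.

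For the inductive step I would argue as follows. Given the environment $u_{n-1}^0$ after $n-1$ services, the server is at $x_*=S(u_{n-1}^0)$ and wants the nearest undiscovered customer. By (iv), searching symmetrically outward from $x_*$, the total intensity accumulated in the interval $[x_*-z,x_*+z]$ is exactly $\int_{x_*-z}^{x_*+z}(\M-u_{n-1}^0)\dd x$; the first customer is encountered when this integral first equals an $\mathrm{Exp}(1)$ random variable $E_n$ — this is the defining equation for $z$ in the operator $\H_{w_n}$. Given that the first point lies on the boundary circle of radius $z$, the conditional probability that it is the left endpoint rather than the right is the ratio of the two one-sided intensities, namely $a/(a+b)$ with $a=\M-u_{n-1}^0(x_*-z)$ and $b=\M-u_{n-1}^0(x_*+z)$ — this is exactly the role of the uniform $U_n$. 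Since $v=\infty$ the server jumps instantly to $x^*$, serves for one time unit (so the clock advances from $n-1$ to $n$, matching $\M\mapsto\M+1$), and in doing so it has queried the entire interval $[x_*-z,x_*+z]$ at time $n$; hence the new age is $0$ there, which is encoded by setting $u_n^0\equiv\M$ on that interval, except at the new server location $x^*$ where we place the spike $\M+1$. Outside the interval nothing was queried, so the age there is unchanged, matching $u_n^0=u_{n-1}^0$ off $[x_*-z,x_*+z]$. Finally, by the strong Markov / spatial-independence property of the Poisson field, conditionally on all of this the remaining undiscovered customers again form a Poisson process with intensity $\M(u_n^0)-u_n^0(\cdot)$, closing the induction and in particular (iii) gives $S_n={\mathcal S}_{n-}$ as random sequences with the same law.

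The main obstacle is making the ``environment viewed from the server'' rigorous as a genuine filtered probability space: one must specify precisely what information is revealed at each step (a region known to be free of customers, plus one customer location), check that this is a stopping-time/optional-sampling construction with respect to the natural filtration of the space-time Poisson field, and verify that the conditional law of the unexplored part is again Poisson with the stated intensity. This is where the hypothesis that arrivals form a Poisson process in space-time, independent of service times, is essential — it is what makes the ``explored region'' argument work, since the restriction of a Poisson process to disjoint regions is independent and the complement of an explored set still carries a Poisson process of the residual intensity. A secondary, more bookkeeping-type point is to confirm that the symmetric outward search from $x_*$ genuinely finds the nearest customer (not merely a customer): because the server moves at infinite speed and customers are found in order of increasing distance, the first point the outward sweep hits is by construction the closest one, but one should note that ties (two customers at exactly the same distance) occur with probability zero, so the uniqueness of the maximizer required in the definition of a potential is preserved almost surely at every step. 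With these points addressed, the equality in distribution of $(S_n)$ and $({\mathcal S}_{t-})_{t\in\NN}$ follows immediately from (iii).
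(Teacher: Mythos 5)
Your proposal is correct and follows essentially the same route as the paper: a progressive symmetric exploration of the space-time Poisson field outward from the server's position, with the swept area giving the exponential variable $E_n$ and the location of the first point on the two vertical boundary segments (of lengths $a$ and $b$) giving the uniform variable $U_n$, together with the restriction/independence property of the Poisson process on the unexplored region. Your write-up is somewhat more detailed than the paper's (which states the exploration argument in a few lines), but the underlying idea is identical; the only nitpick is the gloss of $\mathcal{S}_{n-}$ as the position ``just before the $n$-th service begins'' rather than just before time $n$, which is a verbal slip and not a mathematical gap.
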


The lemma follows from the properties of the Poisson point
process $\nu$ on
\[
\Gamma_u:= \bigl\{(x,t)\dvtx x\in\RR, u(x) \leqslant t < \infty\bigr
\} \subseteq \RR^2.
\]
Indeed, consider a progressive exploration at the left and right
vertical boundaries of the
continuously-expanding region $\{(x,t)\dvtx x_*-z \leqslant x \leqslant
x_*+z, u(x)
\leqslant t \leqslant\M(u) \}$ as $z$ increases, starting from $0$ until
finding the first point
$(x^{*},t^{*})$ of~$\nu$.
The variable $E$ is given by the area of the explored region.
The variable $U$ is related to the position of $(x^{*},t^{*})$ on the
union of
the two disjoint vertical intervals where this region is growing, and
is given
by $t^* = \M(u) - |(a+b)U - a|$, $x^*=x_*+z \cdot\sgn[(a+b)U - a]$.
By the properties of a Poisson point process, $E$ and $U$ are
independent of
each other, distributed as standard exponential and uniform variables,
regardless of how $\nu$ had been explored outside $\Gamma_u$.

We now consider some properties of the operators $\H$.
Let $\theta_z u = u(z+\cdot)$.
For any potential $u$, any number $c$, and any point $z$, $\M(\theta_z
u+c)=\M(u)+c$ and $S(\theta_z u+c)=S(u)-z$.
It follows from the definition of $\H$ that
%
\begin{equation}
\label{eq:H-only-sees-shape} \H_w(\theta_z u + c) =
\theta_z \H_w(u) + c.
\end{equation}

A potential $u$ is said to be \emph{centered} if $S(u)=0$ and $\M(u)=0$.
Define the operator $\Theta^u(\cdot)=\theta_{S(u)}(\cdot)-\M(u)$,
so that $\Theta^u(u)$ is centered.
For given potentials $u$ and $\tilde{u}$,
%
\begin{equation}
\label{eq:Theta-composed} \Theta^{\Theta^{u}(\tilde{u})} \circ\Theta^{u} =
\Theta^{\tilde{u}}.
\end{equation}

The natural \emph{shifts} in this evolving sequence of potentials
$(u^0_n)_{n\geqslant0}$ is given for each $k$ by
$(u^{k}_n)_{n\geqslant0}$
defined as $u^{k}_n:= \Theta^{u^{k-1}_1} (u^{k-1}_{n+1})$.
Expanding this recursion and using (\ref{eq:Theta-composed}) yields
%
\begin{equation}
\label{eq:ukn} u^{k}_n = \Theta^{u^{k-1}_1} \bigl(
\Theta^{u^{k-2}_1} \bigl(u^{k-2}_{n+2}\bigr)\bigr) =
\Theta^{u^{k-2}_2} \bigl(u^{k-2}_{n+2}\bigr) = \cdots=
\Theta^{u^{0}_k} \bigl(u^{0}_{n+k}\bigr).
\end{equation}
In particular, $u^{k}_0 = \Theta^{u^{0}_k}(u^{0}_{k})$.
Writing
\[
\H^k_n = \H_{w_{k+n}} \circ\cdots\circ
\H_{w_{k+2}} \circ\H_{w_{k+1}},
\]
it follows from (\ref{eq:H-only-sees-shape}) that $u^{k}_n = \H^k_n(u^k_0)$.
Therefore, $u^k_0$ is determined by $w_1,w_2,\break g\ldots, w_k$, whereas
$(u^{k}_n)_{n\geqslant0}$ is determined by $w_{k+1},w_{k+2},\ldots$
and $u^k_0$
itself.

The above properties imply that the evolution of $(u^0_n)_n$ is a homogeneous,
translation-invariant, height-invariant, Markov chain in the space of
potentials.
At any moment $k$, we
can take $u^0_k$ and move the axes so that the origin is placed on its maximum
(i.e., apply $\Theta^{u^{0}_k}$), obtaining $u^k_0$, and from this
point on the evolution of $(u^k_n)_n$ is independent of $(u^0_1,\ldots,u^0_k)$,
and obeys the same transition rules.
Moreover, $(u^k_n)_n$ is related to $(u^0_{k+n})_n$ by $u^k_n =
\Theta^{u^{0}_k} (u^0_{k+n})$.
An example depicting this construction is shown in
Figure~\ref{fig:potentialchain}.

\begin{figure}

\includegraphics{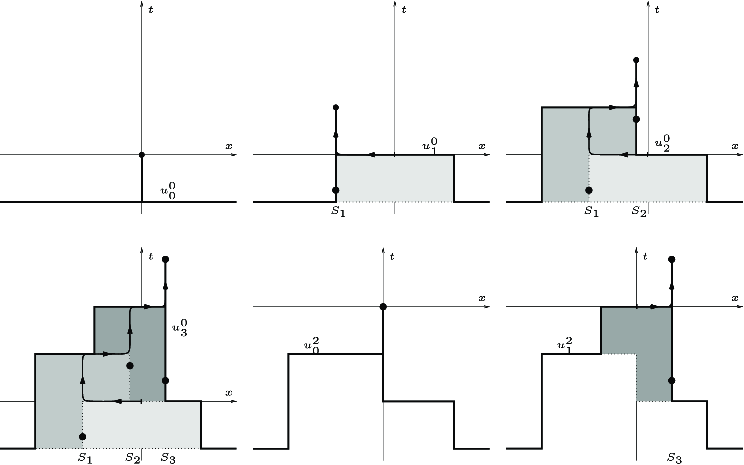}

\caption{Revealing three points of $\nu\subseteq\RR\times
(-1,\infty
)$ to
determine the greedy server's first steps.
Before starting, the configuration is unknown on the whole $\nu
\subseteq\RR
\times(-1,\infty)$, represented by the graph $u^0_0(x)=\delta_0(x)-1$.
The nearest customer found at time $0$ corresponds to the bold point $(x^*,t^*)$
in the
second plot (middle above), where the graph of $u^0_1$ covers the
region that
had to be explored in order to find $(x^*,t^*)$.
After serving this customer, the point in $\nu$ corresponding to the nearest
customer corresponds to a new bold point appearing in the third plot,
where the
graph of $u^0_2$ covers the total region explored in these two steps.
The server's trajectory is depicted by the arrowed, curly path, and
consists of
unit service times alternated with instantaneous space displacements.
The fourth plot (below, left) shows the three points of $\nu$
determining the
construction of $u^3_0$, the region of $\Gamma_{u^0_0}$ explored and
the path
performed by the server during the interval $[0,3)$.
The fifth and sixth plots (below, center and right) depict the
Markovian nature
of this procedure.
At the second customer's departure time, we place the axes on the
maximum of
$u^0_2$, obtaining $u^2_0$.
Notice that in this picture there is no record of the past trajectory
and the
location of the other two points also called $(x^*,t^*)$.
It turns out that the potential is enough in order to determine the future
evolution, and we find the same point $(x^*,t^*)$ corresponding to the next
customer.}
\label{fig:potentialchain}
\end{figure}

%
%
%
%
%

This motivates us to define the evolution of the greedy server model starting
from any centered potential $u$ as the initial $u_0^0$, not necessarily given
by $\delta_0-1$.
Namely, the system starts at $t=0$, with customer arrivals in
space--time given
by a Poisson point process $\nu$ on $\Gamma_u$.
We denote its law by $\PP^u$.

In the proof of Theorem~\ref{thm:transientlog}, we only use two
properties of
$u^0_0(x)=\delta_0(x)-1$.
We say that a potential $u$ is \emph{unimodal} if $u$ is nondecreasing on
$(-\infty,S(u))$ and nonincreasing on $(S(u),+\infty)$.
We say that a potential $u$ is \emph{bounded} if $m(u):= \M(u) -
\inf_{x \in
\RR}u(x)$ is finite.
Each of these conditions is preserved by the operators of the form $\H
_\omega$.
Since they are also preserved by $\theta_z$ and $u\mapsto u+c$,
starting from
$u^0_0(x)=\delta_0(x)-1$ the potentials $u^k_n$ are unimodal and
bounded for any
$k$ and $n$.
Proposition~\ref{prop:blocks} below, though, uses only unimodality.

\textit{Grouping customers.}
Theorem~\ref{thm:transientlog} is proved by grouping customers as in the
following proposition.
Write
\[
\ell_j=\bigl\lceil12j^{1/4} +1 \bigr\rceil, \qquad j=1,2,3,\ldots.
\]

%
\begin{proposition}
\label{prop:blocks}
Let $(S_n)$ be a greedy random walk generated by a centered unimodal
initial potential $u$.
For any $\varepsilon>0$, there exists
$\delta>0$ depending on
$\varepsilon$ but not on the potential $u$, and
a sequence of stopping times
$L_0,L_1,L_2,L_3,\ldots$ with the following properties.

Let $\sigma= \sgn S_1$, $Z_j=\sigma S_{L_j}$, $N_j = L_j-u(S_{L_j})$,
$Q_j=L_{j+1}-L_j$ and $X_{j} = Z_{j+1} - Z_{j}$.
Then, with probability at least $\delta$, we have, for all
$j=1,2,3,\ldots,$
%
\begin{eqnarray}
\label{eq:groupbehavior} %
\cases{ Q_j = \ell_j \mbox{ or
} \ell_j+1, &\vspace*{2pt}
\cr
X_j^- \leqslant
X_j \leqslant X_j^+, &\vspace*{2pt}
\cr
Z_{j-1} <
\sigma S_n < Z_{j+1}, &\quad $\mbox{for } L_j
\leqslant n < L_{j+1},$} %
\end{eqnarray}
where
\[
X_j^- = (1-\varepsilon) \frac{\ell_j-1}{N_{j+1}}\quad \mbox{and}\quad
X_j^+ = (1+\varepsilon) \frac{\ell_j}{N_j}.
\]
\end{proposition}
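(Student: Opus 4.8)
The plan is to run a block (renewal) argument for the potential chain $(u^0_n)_n$, built on its self-similarity~(\ref{eq:H-only-sees-shape})--(\ref{eq:Theta-composed}). By the reflection symmetry $x\mapsto -x$ we argue throughout as if $\sigma=+1$, calling a step \emph{forward} if $S_n>S_{n-1}$ and \emph{backward} otherwise. Call a centered unimodal potential \emph{clean at depth $D$} if to the right of its maximum it agrees with an as yet unexplored piece --- so by unimodality its graph only descends there and stays at least $D$ below the maximum --- while just to the left of the maximum it lies exactly one unit below it. Reading off~(\ref{eq:randomupdate}): a potential clean at depth $D$ stays clean, now at depth $D+1$, after a forward step; after a backward step immediately followed by a forward step overshooting the frontier that preceded the backward step it is clean again (the second window re-raises the microscopic ledge left by the first), at the depth two forward steps would have produced; and after a single $\H_w$-step from an \emph{arbitrary} centered unimodal potential one is clean at depth $1-u(S_1)$. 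Accordingly one takes $L_0$ to be a small fixed stopping time after which the potential is clean (e.g.\ $L_0:=1$) and, having defined $L_j$, passes to the recentred potential $u^{L_j}_0=\Theta^{u^0_{L_j}}(u^0_{L_j})$ (again centered, unimodal, clean) and lets block $j$ consist of the next $\ell_j$ steps if these are all forward and of the next $\ell_j+1$ steps otherwise, $L_{j+1}$ being its right endpoint. Then all the $L_j$ are stopping times for the filtration generated by $(w_n)_{n\geqslant1}$, one has $Q_j\in\{\ell_j,\ell_j+1\}$ as required, and $N_{j+1}\geqslant N_j+Q_j$ (with equality when $u(x)=\delta_0(x)-1$).

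The heart of the matter is a single-block estimate, uniform over the class $\mathcal U$ of clean centered unimodal potentials arising in this construction. Call block $j$ \emph{good} (event $G_j$) if at most one of its steps is backward, any backward step is immediately reverted, and~(\ref{eq:groupbehavior}) holds; one would show $\PP^{v}(G_j)\geqslant 1-p_j$ for every $v\in\mathcal U$, with $p_j<1$ and $\sum_j p_j<\infty$. Running along the block from a potential clean at depth $D$ (so $D\geqslant N_j\geqslant L_j\geqslant 2$): an application of $\H_w$, $w=(E,U)$, grows a symmetric window until its area is $E$; since its right half faces unexplored ground of depth $\geqslant D$, whereas over a leftward distance $\asymp E/D$ the graph has descended only by $O(E)$ (here one uses that earlier forward steps had widths $\asymp E_\bullet/D$, so the left staircase has depth $\asymp (\text{distance})\cdot D$), the window has half-width $\asymp E/D$, the values $b,a$ at its right and left endpoints satisfy $b\geqslant D$ and $a=O(E)$, and the step is backward with probability $a/(a+b)\leqslant C/D\leqslant C/L_j$ apart from the exponential tail of $E$. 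The same analysis applied to the shape left by a backward step (clean to the right at depth $\geqslant D$ bar a microscopic ledge of width $\asymp E/D$ one unit below the maximum) shows the next step overshoots that ledge, reverting the excursion, with probability $\geqslant 1-C/D$. Including in $G_j$ the requirement that every $E$ entering the block be $\leqslant C\log j$ (which fails with probability $\leqslant \ell_j j^{-C}$), summation over the $\leqslant\ell_j+1$ steps gives $\PP(\geqslant 2\text{ backward steps})\leqslant C(\ell_j/L_j)^2$ and $\PP(\text{a backward step not reverted})\leqslant C\ell_j/L_j^2$, both summable since $L_j\geqslant\sum_{i<j}\ell_i\geqslant c\,j^{5/4}$ while $\ell_j\leqslant Cj^{1/4}$ (the polynomial, rather than merely unbounded, growth of $\ell_j$ is also what makes $\sum_j e^{-c\epsilon^2\ell_j}<\infty$ for every $\epsilon$). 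On the complement of these events $X_j$ telescopes to $\sum_{k=1}^{Q_j}E_{L_j+k}/D_{L_j+k}$, each depth lying in $[N_j,N_{j+1}]$ by unimodality and an excursion pair contributing precisely its two terms; a Chernoff bound on $\sum_{k=1}^{\ell_j+1}E_{L_j+k}$ then delivers~(\ref{eq:groupbehavior}) except with probability $\leqslant Ce^{-c\epsilon^2\ell_j}$, the asymmetric slack $\ell_j-1$ versus $\ell_j$ in $X_j^{\pm}$ being exactly what absorbs the at most one discarded backward term, while the corridor $Z_{j-1}<\sigma S_n<Z_{j+1}$ is automatic --- the running maximum of $(\sigma S_n)$ over the block occurs at its last (forward) step, and the only downward dip has size $\leqslant C\log j/N_j=o(X_{j-1})$. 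Finally, for the finitely many initial $j$ for which these estimates do not already give $p_j<1$ (in particular those with $\ell_j<1/\epsilon$, as well as the warm-up block) one redefines $G_j$ to forbid any backward step (keeping the size constraints): its probability is bounded below by a positive constant uniformly over $\mathcal U$, because with the $E$'s bounded each factor $1-a/(a+b)$ is bounded away from $0$ and $N_j\geqslant 2$.

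Granting the single-block estimate, the strong Markov property --- each $G_j$ returns a potential in $\mathcal U$ on which the evolution restarts --- yields
\[
  \PP^{u}\!\left(\bigcap_{j\geqslant0}G_j\right)\ \geqslant\ \prod_{j\geqslant0}\inf_{v\in\mathcal U}\PP^{v}(G_j)\ \geqslant\ \prod_{j\geqslant0}(1-p_j)\ =:\ \delta\ >\ 0,
\]
a number depending on $\epsilon$ but not on $u$; on $\bigcap_j G_j$ all of~(\ref{eq:groupbehavior}) holds, which is the proposition. The main obstacle is the geometric input to the single-block estimate: verifying that the clean shape regenerates after every forward step and every reverted backward excursion, and turning this into the quantitative bounds $\PP(\text{backward})\leqslant C/D$ and $\PP(\text{reverted}\mid\text{backward})\geqslant 1-C/D$. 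This calls for a careful case analysis of how the symmetric window of $\H_w$ sits across the staircase of previously explored intervals, together with uniform control of the exponential tails of the $E_n$ along the entire block.
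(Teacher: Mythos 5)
Your plan follows essentially the paper's own route: the same block lengths $Q_j\in\{\ell_j,\ell_j+1\}$, the same regeneration of a ``staircase'' shape after each block (your cleanliness condition is the paper's~(\ref{eq:staircase})), the same dichotomy ``all steps forward, or exactly one backward step immediately reverted,'' summable failure probabilities together with uniform positive lower bounds for the small-$j$ blocks, and the strong Markov product over blocks. The one genuine difference is how the single-block estimate is organized: you run the $(E_n,U_n)$ chain step by step and bound the backward probability by $a/(a+b)\leqslant C/D$ at each application of $\H_w$, whereas the paper conditions on $u_{L_j}$ and counts Poisson points of $\nu$ in explicit space-time regions, so that ``at most one point in $R_2$'' (a second-moment bound giving $1-Cj^{-3/2}$) together with the area and gap controls $B_2,B_3$ on the unexplored right region \emph{deterministically} forces at most one backward step, its immediate reversion, and the bounds on $X_j$; your probabilistic phrasing of the reversion needs exactly the same geometric input. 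Two points you gloss over and that the paper handles explicitly: (i) the gap control $B_3$ (each area increment at most $\ell_j/12$) is what keeps every exploration window inside the strip of width $X_{j-1}^-$ where the potential is known to be high --- this is needed both for your claim $a=O(E)$ and for the corridor condition $Z_{j-1}<\sigma S_n$; and (ii) for $j=1$ this requires a quantitative lower bound on $X_0$, which the paper purchases at step $0$ with the triggering event $X_0\geqslant 3/N_1$ (probability $\tfrac12 e^{-6}$) --- your warm-up ``$L_0:=1$'' alone does not supply it, although forbidding any backward step in the first few blocks, as you propose, would also serve. As you acknowledge, the case analysis you defer is the bulk of the paper's Section~\ref{sec:blocks}, but the outline is sound and would close along the paper's lines.
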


In words,
$Z_{j}$ is the server
position after serving $L_j$ customers (in case $\sigma=+1$, otherwise
the picture is mirrored), $N_{j}$ is
the discontinuity of the potential at the server's position at this moment,
and finally
$X_j$ measures the displacement in space after serving the next $Q_j$
customers.

The above proposition is proved in the next section.
Let us show how it implies the main result.

\begin{pf*}{Proof of Theorem~\ref{thm:transientlog} for $v=\oo$ and $T=1$}
Let $\varepsilon$ be any positive number.
The system starts at time $n_0=0$ from the potential $u^0_0$, and by
Proposition~\ref{prop:blocks}, with probability at least $\delta$
the events (\ref{eq:groupbehavior})
hold for all $j$, for some sequence of stopping times $L_j$.
If it does not hold for all $j$, let $j_*$ be the first $j$ for which
condition~(\ref{eq:groupbehavior}) is violated, and call $n_1=L_{j_*+1}$.
Whether (\ref{eq:groupbehavior}) occurs or not is determined by
$(u^0_{n})_{n=0,1,\ldots,L_{j+1}}$.
Since $L_{j+1}$ is a stopping time,
defining $n_1=\oo$ on the event that (\ref{eq:groupbehavior}) is
satisfied for all $j$, we have that $n_1$ is also a stopping time.
Therefore,
at time $n_1$ the system restarts from some
unimodal bounded potential $u^{n_1}_0$, ignoring the past history, that is,
conditioned on $n_1$ and $u^0_{n_1}$, $(u^{n_1}_n)_{n\geqslant0}$ is
distributed
as $\PP^{u^{n_1}_0}$.
Again, starting from such potential there is probability at least
$\delta$
that (\ref{eq:groupbehavior}) holds for all $j$,
with $(u^0_n)_n$ replaced by $(u^{n_1}_n)_n$.
It thus takes at most a geometric number of restarts (with
parameter $\delta$) to get a success, so there is an a.s. finite time
$n_*$ such that condition (\ref{eq:groupbehavior}) holds for all $j$,
with $(u^0_n)_n$ replaced by $(u^{n_*}_n)_n$.
Notice that $\sigma$ takes a possibly new value at each attempt.

We write $a \sim_\varepsilon b$ if $\limsup\llvert \frac{a}{b}-1\rrvert
\leqslant
\varepsilon$ and $a \sim b$ if $\frac{a}{b}\to1$.
By definition of $\ell$ and $L$, we have $L_j \sim\frac
{48j^{5/4}}{5}$ and
$\ell_j / L_j \sim\frac{5}{4j}$.
Now, by construction of $N$, $L_j \leqslant N_j \leqslant L_j + m(u_0^{n_*})$
and, therefore, $N_{j+1} \sim N_j \sim L_j$.
Finally, assuming that (\ref{eq:groupbehavior}) holds for
all $j$, $X_j \sim_\varepsilon\ell_j/L_j$.

But $Z_{j+1} = Z_0 + \sum_{i=1}^j X_i$, and putting these all together gives
\[
Z_{j+1} \sim_\varepsilon\tfrac{5}{4}\log j.
\]

Finally, the position $S_n$ is given by $S_n = S_{n_*} +\sigma Z_j$ at
times $n$
satisfying $n = n_* + L_j$ and, therefore,
\[
S_{n} \sim_\varepsilon\sigma\log n \qquad\mbox{a.s.}
\]
Since $\varepsilon$ was arbitrary,
\[
\frac{S_{n}}{\sigma\log n} \to1 \qquad\mbox{a.s.}
\]
and using Lemma~\ref{lemma:coupling} this completes
the proof of Theorem~\ref{thm:transientlog} for $v=\oo$ and $T=1$.
\end{pf*}

\section{Block argument}
\label{sec:blocks}

In this section, we prove Proposition~\ref{prop:blocks}.
Let $0<\varepsilon<\frac{1}{2}$.
Here and in the next section, each time $C$ or $c$ (resp., $c_\varepsilon
$ or $C_\varepsilon$) appears, it
denotes a different constant (resp., function of $\varepsilon$) that is
positive,
finite and universal.
We write $a\vee b$ for $\max\{a,b\}$.

We are going to define the event $A_j$ that step $j$ is successful.
For each $j$, the occurrence of $A_j$
implies (\ref{eq:groupbehavior}),
and we will show that there exists a sequence $p_0,p_1,p_2,\ldots,$ depending
only on $\varepsilon$, such that
%
\begin{equation}
\label{eq:successprob} \PP^u ( A_j | A_{j-1},A_{j-2},
\ldots,A_0 ) \geqslant p_j
\end{equation}
and
%
\begin{equation}
\label{eq:prodpositive} \prod_{j=0}^{\oo}
p_j > 0.
\end{equation}
For the latter, we show that $p_j$ increases fast enough so that
$1-p_j$ is
summable, and that $p_j>0$ for all $j$.
Let us drop the superscript $0$ in the potentials $u^0_n$.

We start with $j=0$, omitted in the statement of Proposition~\ref{prop:blocks}.
Define $\ell_0=1$, and take $L_0=0$, $Z_0=S_0=0$, and $L_1=Q_0=1$.
We choose $\sigma=\sgn(S_1)$.
Let $Z_1=X_0=\sigma S_1=|S_1|$, and $N_1 = L_1-u(\sigma Z_1)$.
We say that step $0$ is \emph{successful} if
%
\begin{equation}
\label{eq:triggering} X_0 \geqslant X_0^-:= \frac{4}{N_1},
\end{equation}
otherwise we declare step $0$ to have \emph{failed} and stop.
The next steps $j=1,2,3,\ldots$ are described assuming for simplicity
that $\sigma=+1$.

Suppose that steps $0,1,2,\ldots,j-1$ have been successful and start from
$u_{L_j}$.
Step $j$ may be successful in two situations.
First, if each of the next $\ell_j$ customers
$S_{L_j+1},S_{L_j+2},\ldots,S_{L_j+\ell_j}$ satisfy $S_n > S_{n-1}$,
in which
case we take $Q_j=\ell_j$.
Second, if there is one $\tilde n\in\{L_j+1,\ldots,L_j+\ell_j\}$ such that
$S_{\tilde{n}} < S_{\tilde{n}-1}$, and $S_n > S_{n-1}$ for all
$n\in\{L_j+1,\ldots,L_j+\ell_j,L_j+\ell_j+1\}$ except $\tilde{n}$,
in which
case we take $Q_j=\ell_j+1$.
If none of these two happen, we declare step $j$ to have \emph{failed}
and stop.
Otherwise, in either of the above two cases we say that step $j$ is
\emph{successful} if~(\ref{eq:groupbehavior}) is satisfied.\footnote{We could have taken $Q_j$ always equal $\ell_j+1$ and have a simpler
proposition with nonrandom times $L_j$. In this case, we would define
step $j$ to be successful if $S_{n} > S_{n-1}$ for all $j=L_j+1,\ldots,L_j+\ell_j+1$ except for possibly one $\tilde{n}$ in $L_j+1,\ldots,L_j+\ell_j$. This would result in a simpler statement but less robust
proof. More precisely, the simple estimate (\ref{eq:twopoints}) below
would not suffice, and a special treatment would be needed for the last
point $S_{L_j+\ell_j+1}$.}

Notice that, for $j\geqslant1$, if step $j-1$ is successful we have
%
\begin{equation}
\label{eq:staircase} %
\cases{ \M(u_{L_{j}}) = L_{j},&
\vspace*{2pt}
\cr
u_{L_{j}}(x) = u_0(x) \leqslant
L_{j} - N_{j}, & \quad $\mbox{for } x > Z_{j}$,
\vspace*{2pt}
\cr
u_{L_{j}}(x) \geqslant L_{j} - Q_{j-1},
& \quad $\mbox{for } Z_{j} - X_{j-1}^- < x < Z_{j}$.}
\end{equation}

Having described the grouping steps, it remains to show (\ref{eq:successprob})
and (\ref{eq:prodpositive}).

Recall from the previous section that, once $u_n$ is fixed, the
position of the
next customer $S_{n+1}$ is determined by a pair $E_{n+1},U_{n+1}$ of
exponentially- and uniformly-distributed random variables, or
alternatively by
the Poisson point process $\nu$ restricted to the region $\{(x,t)\dvtx u_{n}(x)
< t \leqslant\M(u_{n}) \}$.

We start with $j=0$.
In this step, we pay a \emph{finite price} $p_0$ to produce a potential
which exhibits a \textsl{plateau} with convenient shape, namely a potential
satisfying (\ref{eq:triggering}).
Recall that $E_1$ and $U_1$ are the exponential and
uniform random variables used in order to produce $u_1$ from $u_0$.
Consider the event that $E_1$ and $U_1$ satisfy the following two requirements.
The first requirement is that $E_1>8$.
The second one is that, given $E_1$, the variable $U_1$ lies on the
largest interval among
$[0,\frac{a}{a+b}]$ and $[\frac{a}{a+b},1]$; see (\ref{eq:randomupdate}).
This is when $\sigma$ is determined.
In the worst case, this interval has length $\frac{1}{2}$, whence the
probability that both conditions are satisfied is at least
$p_0 = \frac{1}{2}e^{-8} > 0$.
The requirement for $U_1$ implies that $u(S_1) \leqslant u(-S_1)$.
Hence, by monotonicity of $u_0$, the occurrence of
the above event implies that
\begin{eqnarray*}
8 &<& \int_{-X_0}^{+X_0} -u(x)\,\dd x \leqslant\int
_{-X_0}^{+X_0} \max_{[{-X_0},{+X_0}]} (- u) \,\dd x
= - 2 X_0 u(\sigma X_0) = - 2 X_0
u(S_1) \\
&\leqslant& 2 X_0 N_1.
\end{eqnarray*}
The above inequality implies $A_0$ and, therefore,
$\PP^u(A_0) \geqslant p_0 > 0$.

Fix some $j=1,2,3,\ldots.$
We will describe events $B_1,B_2,B_3$, omitting
the dependency on $j$, such that $B_1\cap B_2\cap B_3$ implies $A_j$.
The conditional probability of $B_1\cap B_2\cap B_3$ given $u_{L_j}$
can be
bounded from below by some number $p_j$ that does not depend on the potential
$u_{L_j}$ as long as it satisfies (\ref{eq:staircase}).
This in turn implies (\ref{eq:successprob}).

We stress that, even though the knowledge about these events inconveniently
provides more information about $\nu$ than needed in determining $u_{L_{j+1}}$,
we only study them with the purpose of estimating the probability of
$A_j$.
The occurrence of the latter is entirely determined by
$u_{L_j},u_{L_j+1},u_{L_j+2},\break \ldots,u_{L_{j+1}}$.

We consider the evolution given by the point process $\nu$ itself
rather than
the construction specified in (\ref{eq:randomupdate}).
We write $\nu_i = \nu\cap R_i$, where
\begin{eqnarray*}
R_1 &=& \bigl\{ (x,t) \dvtx x > Z_j, u(x)<t\leqslant
L_j \bigr\},
\\
R_2& =& \bigl\{ (x,t) \dvtx Z_j < x <
Z_j+X_j^+, L_j < t\leqslant L_j +
\ell_j+1 \bigr\}
\\
& &{}\cup\bigl\{ (x,t) \dvtx Z_j-X_{j-1}^- < x <
Z_j, u_{L_j}(x) < t\leqslant L_j +
\ell_j+1 \bigr\}.
\end{eqnarray*}
The first event considered is
\[
B_1:= \bigl[ |\nu_2| \leqslant1 \bigr].
\]

Notice that, conditioned on $u_{L_{j}}$, the number of points
$|\nu_2|$ is distributed as a Poisson random variable with mean
given by the area
$|R_2|$.
Now, on the event that $u_{L_j}$ satisfies (\ref{eq:staircase}),
\begin{eqnarray*}
|R_2| &\leqslant&(\ell_j+1) X_j^+ + (
\ell_{j-1}+1) X_{j-1}^- + (\ell_j+1)
X_{j-1}^- \leqslant3 (\ell_j+1) X_j^+ \\
&\leqslant&
C \frac{(\ell_j+1)^2}{N_j} \leqslant C \frac{1}{j^{3/4}}
\end{eqnarray*}
since
\[
\ell_j \leqslant C j^{1/4} \quad\mbox{and}\quad N_j
\geqslant L_j \geqslant Q_0+\cdots+Q_{j-1}
\geqslant C j^{5/4},
\]
and therefore
%
\begin{equation}
\label{eq:twopoints} \PP ( B_1|u_{L_j} ) \geqslant1 - C
|R_2|^2 \geqslant1 - C \frac{1}{j^{3/2}}.
\end{equation}
We also need the estimate to be positive for all $j$, which follows from
\[
\PP ( B_1|u_{L_j} ) \geqslant \PP ( \nu_2 =
\varnothing | u_{L_j} ) = e^{-|R_2|} \geqslant e^{-c} > 0.
\]

We now consider the events $B_2$ and $B_3$, which depend on $\nu_1$.
Define
%
\begin{equation}
\label{eq:aofx} A(x) = \int_{Z_j}^{x} \bigl[
L_j - u(z) \bigr] \,\dd z,\qquad x \geqslant Z_j,
\end{equation}
and write $\nu_1=\{(x_1,t_1),(x_2,t_2),(x_3,t_3),\ldots\}$ with
$x_0=Z_j<x_1<x_2<x_3<\cdots$.
By definition of $\nu_1$, we have that
$(A(x_n)-A(x_{n-1}))_{n=1,2,3,\ldots}$ are i.i.d. exponential random variables
with mean $1$, independent of $u_{L_j}$.
The events $B_2$ and $B_3$ are defined in terms of
$A(x_n),n=1,2,3,\ldots,$
whence the estimates on their probabilities are always uniform on $u_{L_j}$.

Consider the event
%
\begin{equation}
\label{eq:event2} B_2:= \bigl[ (1-\varepsilon) (\ell_j-1)
< A(x_{\ell_j-1}) < A(x_{\ell
_j}) < (1+\varepsilon)\ell_j
\bigr].
\end{equation}
By Chernoff's exponential bounds,
%
\begin{equation}
\label{eq:b2whp} \PP(B_2) \geqslant1-e^{-c_\varepsilon\ell_j}.
\end{equation}

Consider the event
%
\begin{equation}
\label{eq:event3} B_3:= \biggl[ A(x_n)-A(x_{n-1})
\leqslant\frac{\ell_j}{12} \mbox { for } n=1,2,\ldots,\ell_j
\biggr].
\end{equation}
By a simple union bound, we have
%
\begin{equation}
\label{eq:b3whp} \PP( B_3 ) \geqslant1 - \ell_j
e^{-\ell_j/12} \geqslant1 - C e^{-c
\ell_j}.
\end{equation}

Using (\ref{eq:b2whp}) and (\ref{eq:b3whp}), we get
\[
\PP( B_2 \cap B_3 ) \geqslant1 - C_\varepsilon
e^{-c_\varepsilon\ell_j}.
\]
Now, since $\ell_j \geqslant12$, we have
\[
\PP( B_2\cap B_3 ) \geqslant\PP \bigl( 1-\varepsilon<
A(x_n)-A(x_{n-1}) < 1 \mbox{ for } n=1,2,\ldots,
\ell_j \bigr) > e^{-c_\varepsilon\ell_j} > 0
\]
and thus adjusting $c_\varepsilon$ we get
\[
\PP( B_2 \cap B_3 ) \geqslant1 - e^{-c_\varepsilon\ell_j}.
\]

Since $\nu_1$ is conditionally independent of $\nu_2 \cup\nu_3$ given
$u_{L_j}$, we have that
\[
\PP ( B_1 \cap B_2 \cap B_3 |
u_{L_j} ) \geqslant p_j
\]
for
\[
p_j = \bigl( 1- e^{-c_\varepsilon\ell_j} \bigr) \bigl( e^{-c} \vee
\bigl(1-Cj^{-3/2}\bigr) \bigr).
\]

Notice that the sequence $(p_j)_{j=0,1,2,\ldots}$
satisfies (\ref{eq:prodpositive}), thus it only
remains to show that $B_1 \cap B_2 \cap B_3$ implies $A_j$.

Suppose $B_1$, $B_2$ and $B_3$ happen.
By (\ref{eq:staircase}) and monotonicity of $u$, we have
\[
{N_j} [x_n-x_{n-1}] \leqslant
{A(x_n)-A(x_{n-1})} \leqslant \bigl[ L_j -
u(x_{n}) \bigr] [ x_n - x_{n-1} ],
\]
whence by (\ref{eq:event2})
%
\begin{equation}
\label{eq:xelljnotfar} x_{\ell_j-1}-Z_j \leqslant x_{\ell_j}-Z_j
\leqslant (1+\varepsilon)\frac{\ell_j}{N_j}= X_j^+,
\end{equation}
and by (\ref{eq:event3})
%
\begin{equation}
\label{eq:pointsclose} x_n-x_{n-1} \leqslant \frac{\ell_j}{12 N_j}
\leqslant \frac{X_{j-1}^-}{3}.
\end{equation}
Moreover,
for $n=1,2,\ldots,\ell_j-1$,
\[
{A(x_n)-A(x_{n-1})} \leqslant \bigl[ L_j -
u(x_{\ell_{j}-1}) \bigr] [x_n-x_{n-1}]
\]
and, by (\ref{eq:event2}),
\[
x_{\ell_j} -Z_j \geqslant x_{\ell_j-1}
-Z_j \geqslant (1-\varepsilon)\frac{\ell_j-1}{L_j - u(x_{\ell_{j}-1})} \geqslant (1-
\varepsilon)\frac{\ell_j-1}{N_{j+1}} = X_j^-
\]
as long as $Z_{j+1} = S_{L_j+Q_j} \geqslant x_{\ell_j-1}$.

Therefore, to prove (\ref{eq:groupbehavior}) it suffices to show that
%
\begin{equation}
\label{eq:endswell} %
\cases{ x_{\ell_j-1} \leqslant S_{L_j+Q_j}
\leqslant x_{\ell_j},& \vspace*{2pt}
\cr
x_0 - X_{j-1}^-
\leqslant S_{L_j+n} < S_{L_j+Q_j},&\quad $n=1,2,\ldots,Q_j-1.$}
\end{equation}
The remainder of the proof is dedicated to proving (\ref{eq:endswell})
assuming (\ref{eq:xelljnotfar}), (\ref{eq:pointsclose}) and that
$B_1$ occurs.

We first recall that the points in $(x,t)\in\nu$ that correspond to customers
$( S_{L_j+1},S_{L_j+2},\ldots, S_{L_j+Q_j} )$ are such that
$u_{L_j}(x) < t \leqslant L_j+\ell_j+1$.
When these points are neither in $R_1$ nor in $R_2$, they must be in
$R_3$ given by $t \in ( u_{L_j}(x), L_j+\ell_j+1  ]$ and
%
\begin{equation}
\label{eq:pointsfar} x< Z_j-X_{j-1}^- \quad\mbox{or}\quad x >
Z_j+X_j^+.
\end{equation}
The points in $R_1$ are given by $(x_n,t_n)_{n=1,2,\ldots}$, and $R_2$
is either
empty or contains one point, denoted by $(x',t')$.

Let $n'$ be the maximal index between $0$ and $\ell_j$ such that
\[
( S_{L_j},S_{L_j+1},S_{L_j+2},\ldots, S_{L_j+n'-1},
S_{L_j+n'} ) = ( x_0, x_1, x_2,
\ldots, x_{n'-1}, x_{n'} ).
\]
If $n' = \ell_j$, we have $Q_j=\ell_j$, thus (\ref{eq:endswell})
is satisfied.
So suppose $n' \leqslant\ell_j-1$.
We claim that
%
\[
S_{L_j+n'+1}=x'
\]
with $x'$ satisfying
\[
x_{n'}-\frac{\ell_j}{12 N_j} \leqslant x' <
x_{n'+1},
\]
and moreover
%
\begin{equation}
\label{eq:nextpoints} S_{L_j+n+1}=x_n \qquad\mbox{for }
n=n'+1,n'+2,\ldots,\ell_j,
\end{equation}
that is, the points in $R_3$ cannot participate in the construction of
$S_{L_j+Q_j}$.

In the case $x'<x_{n'}$, we will have $Q_j = \ell_j+1$ and
$S_{L_j+Q_j}=x_{\ell_j}$.
Otherwise, $x_{n'}<x'<x_{n'+1}$, we will have $Q_j = \ell_j$, and in
this case
$S_{L_j+Q_j}=x_{\ell_j-1}$ if $n' \leqslant\ell_j-2$ or
$S_{L_j+Q_j}=x' \in(x_{\ell_j-1},x_{\ell_j})$ if $n'=\ell_j-1$.
Therefore, (\ref{eq:endswell}) is always satisfied.

It thus remains to prove the above claim.
By definition of $n'$, the point $({x}',{t}')\in\nu$ corresponding to
$S_{L_j+n'+1}$ cannot be in $R_1$.
But it cannot be in $R_3$ either.
Indeed, since
$S_{L_j+n'}=x_{n'}$ and
\[
x_{n'} < x_{n'+1} \leqslant x_{n'} +
\frac{\ell_j}{12 N_j},
\]
we must have
\[
x_0 - \frac{\ell_j}{12 N_j} \leqslant x_{n'} -
\frac{\ell_j}{12 N_j} \leqslant {x}' < x_{n'+1},
\]
thus ${x}'$ cannot satisfy (\ref{eq:pointsfar}).
Therefore, $(x',t')$ is the only point in $\nu_2$.

We finally show (\ref{eq:nextpoints}).
Start with $n=n'+1$.
Write $\tilde{x}=S_{L_j+n'+2}$, corresponding to a point
$(\tilde{x},\tilde{t})\in\nu$.
This point cannot be in $R_2$, since $(x',t')$ was the only such point.
As before,
\[
\bigl|x'-x_{n'+1}\bigr| \leqslant \bigl|x'-x_{n'}\bigr|
+ |x_{n'}-x_{n'+1}| \leqslant \frac{\ell_j}{6 N_j},
\]
thus we must have
\[
\tilde{x} < x_{n'+1} \leqslant x_{\ell_j} \leqslant
Z_j+X_j^+
\]
and
\[
\bigl|\tilde{x} - x'\bigr| < \frac{\ell_j}{6 N_j},
\]
whence
\[
\tilde{x} > x'-2\frac{\ell_j}{N_j} \geqslant x_0 -
\frac{\ell_j}{4 N_j}
\]
and again $\tilde{x}$ cannot satisfy (\ref{eq:pointsfar}) either.
Therefore, $(\tilde{x},\tilde{t})\in\nu_1$ which implies $\tilde
{x}=x_{n'+1}$.
For $n=n'+2,\ldots,\ell_j$, the argument is the same.

\section{Finite speed and random service times}
\label{sec:finitespeedrandomservice}

In this section, we show how the proof of Theorem~\ref
{thm:transientlog} for the
particular case $T=1,v=\infty$ can be adapted to more broad conditions as
stated in Section~\ref{sec:sec1introduction}.
We start describing the analogous construction for the stochastic
evolution of
potentials.
Assume that at time $t=0$ the server starts serving a customer at $x=0$
(for convenience, we consider here the potentials corresponding to
times when
service starts).
Assume also that the set of waiting customers is given by a Poisson Point
Process on $\RR$ with intensity $-u_0(x) \,\dd x$ for a unimodal
potential $u_0$
with maximal value $u_0(0)=0$.
In analogy with (\ref{eq:randomupdate}), given $w=(T,E,U)$ we define the
operator $\H_w$ by
\begin{eqnarray*}
\int_{x_*-z}^{x_*+z}(\M+T-u)\,\dd x &=& E,\qquad
\begin{array} {l} a=\M+T-u(x_*-z),
\\
b=\M+T-u(x_*+z), \end{array} \\
 x^{*} &=& %
\cases{
x_* - z, &\quad $\mbox{if }\displaystyle U \in\biggl(0, \frac{a}{a+b}\biggr]$,\vspace*{2pt}
\cr
x_* + z, &\quad
$\mbox{if }\displaystyle U \in\biggl(\frac{a}{a+b},1\biggr)$, } %
\end{eqnarray*}
and
\[
\bigl(\H_w (u) \bigr) (x) = %
\cases{\displaystyle \M+T+
\frac{z}{v}, &\quad $x=x^*$,\vspace*{2pt}
\cr
\M+T, & \quad $x\in[x_*-z,x_*+z], x\ne
x^*$,\vspace*{2pt}
\cr
u(x), & \quad $\mbox{otherwise.}$} %
\]
Notice that $\M(\H_w (u))=\M(u)+T+\frac{z}{v}$, $S(\H_w
(u))=x^{*}$, and
$\int_\RR[\H_w (u)-u]\,\dd x = E$.

We take an i.i.d. sequence $(\omega_n)_{n=1,2,\ldots}$, where each
$\omega_n=(T_n,E_n,U_n)$ has independent coordinates, distributed respectively
as the service time, a standard exponential, and a uniform on $[0,1]$.
We define $u^k_n$ by (\ref{eq:ukn}) and let $u_n=u^0_n$ and $u=u_0$.

Define $t_n = \M(u^0_n)$ and $S_n=S(u^0_n)$.
In analogy with Lemma~\ref{lemma:coupling}, we have

\begin{lemma}
The pair sequence $(t_n,S_n)_{n=1,2,\ldots}$ described above has the
same distribution
as $(t_n,{\mathcal S}_{t_n})_{n=1,2,\ldots}$ given by the beginning
of service times and the corresponding positions.
\end{lemma}

For the evolution $(u_n)_{n=0,1,2,\ldots}$ we will define a sequence of stopping
times $0=\N_0<\N_1<\N_2<\cdots$ in $\NN_0$, as well as the
corresponding events of success
$A_j$ defined in terms of $u_{\N_j}$ and whose occurrence
is determined by $u_0,u_1,\ldots,u_{\N_{j+1}}$.

The construction will have the following properties.
For some sequence $p_j$ and any $u$ that is centered and unimodal,
%
\begin{equation}
\label{eq:blockestimateprobab} \PP^{u} ( A_{j} | u_{\N_j} )
\geqslant p_j \qquad\mbox{on } A_0 \cap\cdots\cap
A_{j-1}\quad \mbox{and}\quad \prod_j
p_j > 0.
\end{equation}
Moreover, the event $\bigcap_{j=0}^\infty A_j$ implies $S_{n} \sim
_\varepsilon\sigma\log n$
just as in the proof of Theorem~\ref{thm:transientlog} in the end of
Section~\ref{sec:potential}.

Step $0$ provides $\sigma=\pm1$ which indicates the direction in which
subsequent blocks are supposed to grow.
In the steps described below we let
$\N_{j}=Q_0+\cdots+Q_{j-1}$, where $Q_j$ is the number of customers
served in each block,
$L_j = \M(u_{\N_j})$ the physical time,
$N_j = L_j-u(S_{\N_j})$ the height of discontinuity in the potential,
$Z_j=\sigma S_{\N_j}$,
$X_{j} = Z_{j+1} - Z_{j}$ physical displacement during each block,
and $M_j=L_{j+1}-L_j$ is the time elapsed within each block.
In this setting, the time $L_{j+1}$ is given by the instant when the server
reaches the customer located at $\sigma Z_{j+1}$, and the next block starts.

Let $\ell_j$ be given as above, and write $m_j = \ell_1+\cdots+\ell_j$.
Let $j_*$ be such that
\[
\frac{1}{m_{j_*}}\cdot\frac{1}{v} < \frac{1}{16} \quad\mbox{and}\quad \PP
\biggl(\frac{n}{2} < T_1+\cdots+T_{n} < 2n \biggr)>0\qquad
\mbox{for all } n>\ell_{j_*}.
\]
Fix $m=1+m_{j_*}$.
In steps $1,\ldots,j_*$ we will relax the lower bound on time and take
$M_j^-=0$.
This is compensated by finding a big number of customers at step $0$,
namely $Q_0=m$.
So the triggering step will take care of however small the speed $v$
is, as well as complications arising from the distribution of $T$.

The event $A_0$ is defined by the following conditions.
First, that $S_m$ is an unexplored point, that is, $\sigma S_m > \sigma
S_n$ for all $n=0,1,2,\ldots,m-1$ and some $\sigma=\pm1$.
Second, $M_0 \geqslant M_0^- = m_{j*}$.
Finally,
\[
T_{m} \leqslant1 \quad\mbox{and}\quad X_0^- \leqslant|S_m
- S_{m-1}| \leqslant X_0^+,
\]
where $X_0^- = \frac{4}{N_1}$ and $X_0^+=v$.

We claim that $\PP^u(A_0)\geqslant p_0$ for some $p_0>0$ that does not
depend on $u$.
To prove the claim, consider the following events.
First, suppose $T_1 \geqslant1$.
Suppose also that $E_1 \geqslant16$, and $U_1$ is such that $S_1$ lies
on the bigger side of $-u$, as in Section~\ref{sec:blocks}.
Assuming that all these happen, $\sigma$ is determined by which
direction $S_1$ was found, that is, the higher side of the \textsl
{plateau} in $u_1$.
In the sequel we assume for simplicity that $\sigma=+1$, otherwise
mirror the system around $x=0$.
Now suppose that, for all $n=2,\ldots,m-1$, $T_n \geqslant1$, $E_n \in
[0,1]$, and $U_n > \frac{1}{2}$.
Finally, suppose that $T_m \leqslant1$, $8 \leqslant E_m < 16$ and
$U_m > \frac{1}{2}$.

Let us show that these events imply $A_0$, which proves our claim.
By assumption $S_1>0$ and
\[
16 \leqslant E_1 = \int_{-S_1}^{S_1}
\bigl[T_1 - u(x)\bigr] \,\dd x \leqslant-2 S_1 \cdot
u(S_1)
\]
and thus
\[
-u(S_1) \geqslant\frac{8}{S_1}.
\]
Writing $z_2=|S_2-S_1|$, we have
\[
1 \geqslant E_2 = \int_{S_1-z_2}^{S_1+z_2}
\bigl[\M(u_1)+T_2-u_1(x)\bigr] \,\dd x \geqslant
\int_{S_1}^{S_1+z_2} \bigl[-u_1(x)\bigr] \,\dd
x \geqslant-u(S_1) \cdot z_2
\]
and thus $z_2 \leqslant\frac{S_1}{8}$ and $0<S_1-z_2<S_1$.
Since $u_1(x) \leqslant0$ for $x>S_1$ and $u_1(x)=T_1>0$ for
$-S_1<x<S_1$, the choice of $U_2 > \frac{1}{2}$ implies that $S_2=S_1+z_2>S_1$.
By the same argument, $E_3 \leqslant1$ implies $|S_3-S_2| \leqslant
\frac{S_1}{8}$, and thus $U_3 > \frac{1}{2}$ implies $S_3>S_2$, and
so on.
Therefore, $S_{m-1}>S_1$ and $u_{m-1}(x) = u(x)$ for $x>S_{m-1}$.
As before, writing $z_m=|S_m-S_{m-1}|$ we have
\begin{eqnarray*}
16 &>& E_m = \int_{S_{m-1}-z_m}^{S_{m-1}+z_m} \bigl[\M
(u_{m-1})+T_m-u_{m-1}(x)\bigr] \,\dd x \\
&\geqslant&
z_m \cdot\bigl[T_1+\cdots+T_m -
u(S_{m-1})\bigr],
\end{eqnarray*}
thus $z_m < 2S_1$, and since $U_m>\frac{1}{2}$ we have $S_m>S_{m-1}$.
Moreover, $T_1+\cdots+T_m \geqslant m_{j_*}$ and thus $S_m - S_{m-1} =
z_m < v = X_0^+$.
Finally,
\begin{eqnarray*}
8 &\leqslant& E_{m} = \int_{S_{m-1}-z_m}^{S_{m-1}+z_m}
\bigl[\M (u_{m-1})+T_m-u_{m-1}(x)\bigr] \,\dd x
\leqslant2z_m \cdot\bigl[\M(u_m) - u(S_m)
\bigr]\\
& =& 2z\cdot N_1,
\end{eqnarray*}
and thus $S_m-S_{m-1} = z_m \geqslant X_0^-$. This proves the above claim.

For $j\geqslant1$, we define $Q_j$ and the event $A_j$ as in
Section~\ref{sec:blocks}, with condition (\ref{eq:groupbehavior})
replaced by
\[
\cases{ Q_j = \ell_j \mbox{ or }
\ell_j+1, \vspace*{2pt}
\cr
X_j^- \leqslant
X_j \leqslant X_j^+, \vspace*{2pt}
\cr
M_j^-
\leqslant M_j \leqslant M_j^+, \vspace*{2pt}
\cr
Z_{j-1} < S_n < Z_{j+1},&\quad $\mbox{for }
\N_j \leqslant n < \N_{j+1},$} %
\]
where
$
X_j^- = (1-\varepsilon) \frac{\ell_j-1}{N_{j+1}},
X_j^+ = (1+\varepsilon) \frac{\ell_j}{N_j},
M_j^+ = 3 \ell_j + 3
$
and $M_j^- = \frac{1}{2} \ell_j \I_{j>j_*}$.

Assuming that $(A_0 \cap\cdots\cap A_{j-1})$ occurs,
since $u_{\N_j}$ is unimodal it must satisfy
\[
\cases{ \M(u_{\N_{j}}) = L_{j},
\vspace*{2pt}
\cr
u_{\N_{j}}(x) = u_0(x) \leqslant
L_{j} - N_{j}, &\quad $\mbox{for } x > Z_{j}$,
\vspace*{2pt}
\cr
u_{\N_{j}}(x) \geqslant L_{j} -
M_{j-1}, &\quad $\mbox{for } Z_{j} - X_{j-1}^- < x <
Z_{j}$.} %
\]
For $j=1$ the last condition is replaced by
$u_{\N_{1}}(x) \geqslant L_{1} - T_m - \frac{X_0^+}{v} \geqslant L_1
- 2$.
Moreover,
\[
N_j = -u(\sigma Z_j)+L_j \geqslant
L_j \geqslant M_0^-+\cdots+M_{j-1}^- \geqslant
m_j \geqslant c j^{5/4}
\]
and thus
$X_j^+ \leqslant C j^{-1}$ and $X_{j-1}^- \leqslant C j^{-1}$.
Therefore, $M_j^+ \leqslant C j^{1/4}$.

To estimate $\PP^{u}(A_j|u_{\N_j})$ on $(A_0 \cap\cdots\cap
A_{j-1})$, we consider the events $B_1$, $B_2$, and $B_3$ as in
Section~\ref{sec:finitespeedrandomservice}.

The region analogous to $R_2 \subseteq\RR\times\RR$ is contained in
the union
of the rectangles
$[Z_j-X_{j-1}^-,Z_j] \times
[L_j-M_{j-1}^+,L_j+M_j^+]$ and $[Z_j,Z_j+X_j^+]\times[L_j,L_j+M_j^+]$
with $M_0$ replaced by $T_m+\frac{X_0^+}{v} \leqslant2$ for $j=1$.
The above inequalities imply that
$|R_2| \leqslant(X_{j-1}^- + X_j^+)(M_{j-1}^+ + M_{j}^+) \leqslant C j^{-3/4}$.
Therefore,
\[
\PP^u( B_1 | u_{\N_j}) \geqslant \biggl( 1 -
\frac{C}{j^{3/2}} \vee e^{-c} \biggr).
\]

Events $B_2$ and $B_3$ are defined by (\ref{eq:event2}) and (\ref{eq:event3}).
Therefore, occurrence of $B_2 \cap B_3$ implies inequalities (\ref
{eq:xelljnotfar}) and (\ref{eq:pointsclose}), and its probability
satisfies (\ref{eq:blockestimateprobab}).
The desired bounds for $S_n$ for $\N_j \leqslant n < \N_{j+1}$ and for
$X_j$ thus follow exactly as in Section~\ref{sec:blocks}.

It remains to control $M_j$, which was not necessary in the case
$T=1,v=\infty$ because $M_j=Q_j$ in that setup.
But $M_j$ is composed of $Q_j$ service times plus traveling time.
The latter is nonnegative and bounded by
\[
2\frac{X_j}{v} \leqslant 2\frac{X_j^+}{v} \leqslant 2 \frac{\ell_j/N_j}{v}
\leqslant 2 \frac{\ell_j/m}{v} \leqslant \ell_j.
\]
Therefore, the inequality $M_j^- \leqslant M_j \leqslant M_j^+$ holds whenever
the sum of $Q_j$ service times is bigger than $\frac{1}{2}Q_j \I
_{j>j_*}$ and less than $2Q_j$.
The probability of this event is exponentially high in $\ell_j$, and
positive by the choice of $j_*$.
This completes the proof of Theorem~\ref{thm:transientlog}.

\section*{Acknowledgment}
This work has been started at IMPA in February 2011, and
S.~Foss thanks the institute for the hospitality.

%



\printaddresses
\end{document}